\title{Generalized Analogs of the Heisenberg Uncertainty Inequality}
\numberwithin{equation}{section}
\newtheorem{thm}{\sc Theorem}[section]
\newtheorem{prop}[thm]{\sc Proposition}
\newtheorem{ex}[thm]{\sc Example}
\newcommand{\R}{\mathbb{R}}
\newcommand{\h}{\mathbb{H}}
\newcommand{\C}{\mathbb{C}}
\newcommand{\F}{\mathscr{F}}
\newcommand{\CS}{\mathcal{S}}
\newcommand{\g}{\mathfrak{g}}
\newcommand{\hf}{\mathfrak{h}}
\newcommand{\z}{\mathfrak{z}}
\newcommand{\OO}{\mathcal{O}}
\newcommand{\U}{\mathcal{U}}
\newcommand{\W}{\mathcal{W}}
\newcommand{\B}{\mathcal{B}}
\DeclareMathOperator*{\expo}{exp}
\DeclareMathOperator*{\pf}{Pf}
\DeclareMathOperator*{\dint}{\displaystyle\int}
\begin{document}
\author{ASHISH BANSAL}
\address{\noindent Department of Mathematics, Keshav Mahavidyalaya (University of Delhi), H-4-5 Zone, Pitampura, Delhi 110034, India}
\email{abansal@keshav.du.ac.in}
\author{AJAY KUMAR}
\address{Department of Mathematics, University of Delhi, Delhi 110007, India}
\email[Corresponding author]{akumar@maths.du.ac.in}
\thanks{}
\keywords{Heisenberg Uncertainty Inequality, Nilpotent Lie Group, Euclidean Motion Group, Plancherel Formula, Fourier Transform}
\subjclass[2010]{Primary 22E25, Secondary 43A25, 22D10.}
\begin{abstract}
We investigate locally compact topological groups for which a generalized analogue of Heisenberg uncertainty inequality hold. In particular, it is shown that this inequality holds for $\R^n \times K$ (where $K$ is a separable unimodular locally compact group of type I), Euclidean Motion group and several general classes of nilpotent Lie groups which include thread-like nilpotent Lie groups, $2$-NPC nilpotent Lie groups and several low-dimensional nilpotent Lie groups. 
\end{abstract}
\maketitle
\section{Introduction}
In $1927$, Werner Heisenberg gave a principle related to the uncertainties in the measurements of position and momentum of microscopic particles. This principle is known as \textit{Heisenberg uncertainty principle} and can be stated as follows:
\begin{quotation}
\textit{It is impossible to know simultaneously the exact position and momentum of a particle. That is, the more exactly the position is determined, the less known the momentum, and vice versa.}
\end{quotation}

In 1933, N. Wiener gave the following mathematical formulation of the Heisenberg uncertainty principle:
\begin{quotation}
\textit{A nonzero function and its Fourier transform cannot both be sharply localized.}
\end{quotation}

The \textit{Heisenberg's uncertainty inequality} is precise quantitative formulation of the above principle.  

The Fourier transform of $f \in L^1(\R^n)$ is given by,
\begin{flalign*}
&&\widehat{f}(\xi)&=\dint\limits_{\R^n}{f(x)\ e^{-2\pi i \langle{x,\xi}\rangle}}\ dx,&
\end{flalign*}
where $\langle{\cdot ,\cdot }\rangle$ denotes the usual inner product on $\R^n$. This definition of Fourier transform holds for functions in $L^1(\R^n) \cap L^2(\R^n)$. Since, $L^1(\R^n) \cap L^2(\R^n)$ is dense in $L^2(\R^n)$, the definition of Fourier transform can be extended to the functions in $L^2(\R^n)$. 

The following theorem gives the Heisenberg uncertainty inequality for the Fourier transform on $\R^n$. For proof of the theorem, see \cite{Fol:Sit:97}.
\begin{thm}\label{theq-Rn}
For any $f \in L^2(\R^n)$, we have
\begin{flalign}
&&\dfrac{n\|f\|_2^2}{4\pi}&\leq \left(\dint\limits_{\R^n}{\|x\|^2\ |f(x)|^2}\ dx\right)^{1/2}\left(\dint\limits_{\R^n}{\|y\|^2\ |\widehat{f}(y)|^2}\ dy\right)^{1/2},   \label{heq-Rn}&
\end{flalign}
where $\|\cdot\|_2$ denotes the $L^2$-norm and $\|\cdot\|$ denotes the Euclidean norm. 
\end{thm}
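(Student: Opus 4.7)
The plan is to establish the inequality first for Schwartz functions via integration by parts, the Cauchy--Schwarz inequality, and the Plancherel theorem, and then to extend to arbitrary $f\in L^2(\R^n)$ by a density argument.

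Suppose first that $f\in\CS(\R^n)$. The starting point is the identity $\sum_{j=1}^n \frac{\partial x_j}{\partial x_j}=n$, which together with integration by parts (using the rapid decay of $f$) gives
\begin{equation*}
n\|f\|_2^2 \;=\; -\sum_{j=1}^n \dint_{\R^n} x_j\,\frac{\partial}{\partial x_j}\bigl(|f(x)|^2\bigr)\,dx \;=\; -2\,\mathrm{Re}\,\sum_{j=1}^n \dint_{\R^n} x_j\,\overline{f(x)}\,\frac{\partial f}{\partial x_j}(x)\,dx.
\end{equation*}
Taking absolute values and applying the Cauchy--Schwarz inequality coordinate-wise, then summing in $j$ via a second Cauchy--Schwarz in $\ell^2$, I would bound the right-hand side by
\begin{equation*}
2\Bigl(\dint_{\R^n}\|x\|^2|f(x)|^2\,dx\Bigr)^{1/2}\Bigl(\sum_{j=1}^n\dint_{\R^n}\Bigl|\tfrac{\partial f}{\partial x_j}\Bigr|^2 dx\Bigr)^{1/2}.
\end{equation*}

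The next step is to convert the derivative term into a moment of $\widehat{f}$. Using the standard formula $\widehat{\partial_j f}(y)=2\pi i\,y_j\,\widehat{f}(y)$ and the Plancherel theorem,
\begin{equation*}
\sum_{j=1}^n \dint_{\R^n}\Bigl|\tfrac{\partial f}{\partial x_j}\Bigr|^2 dx \;=\; 4\pi^2 \dint_{\R^n}\|y\|^2\,|\widehat{f}(y)|^2\,dy.
\end{equation*}
Substituting and dividing by $4\pi$ yields \eqref{heq-Rn} for $f\in\CS(\R^n)$.

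For general $f\in L^2(\R^n)$ the inequality is trivial if either factor on the right is infinite, so assume both moments are finite. I would then choose a sequence $\{f_k\}\subset\CS(\R^n)$ with $\|f_k-f\|_2\to 0$, $\|\,\|x\|(f_k-f)\,\|_2\to 0$, and $\|\,\|y\|(\widehat{f_k}-\widehat{f})\,\|_2\to 0$ (obtainable by truncating $f$ and $\widehat{f}$ on large balls and then mollifying), and pass to the limit in the Schwartz-class inequality.

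The computational steps are straightforward; the only place requiring real care is the final density step, since one needs the approximants to converge simultaneously in the weighted norms attached to both $x$ and $y=\widehat{\,\cdot\,}$, not merely in $L^2$. The standard trick is to apply a smooth cutoff to $f$ in position space \emph{and} a cutoff to $\widehat{f}$ in frequency space before convolving with an approximate identity, after which the joint convergence follows from dominated convergence.
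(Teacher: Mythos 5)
The paper does not prove this theorem; it simply cites Folland--Sitaram \cite{Fol:Sit:97}, and your argument (integration by parts on $n\|f\|_2^2=\sum_j\int \partial_j x_j\,|f|^2$, Cauchy--Schwarz twice, then $\widehat{\partial_j f}(y)=2\pi i y_j\widehat{f}(y)$ with Plancherel) is exactly the standard proof found there, with the constants correctly tracked for the $e^{-2\pi i\langle x,\xi\rangle}$ convention used in the paper. The density step you flag is indeed the only delicate point, and your truncate-then-mollify scheme handles it adequately, so the proposal is correct and essentially coincides with the cited proof.
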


The Heisenberg uncertainty inequality has been established for the Fourier transform on the Heisenberg group by Thangavelu \cite{Tha:90}. Further generalizations of the inequality on the Heisenberg group have been establishd by Sitaram et al.  \cite{Sit:Sun:Tha:95} and Xiao et al. \cite{Xia:He:12}. For some more details, see \cite{Fol:Sit:97}.

The inequality given below can be proved using H\"older's inequality and the inequality \eqref{heq-Rn}.
\begin{thm}
For any $f \in L^2(\R^n)$ and $a,b \geq 1$, we have
\begin{flalign*}
&&\dfrac{n\|f\|_2^{\left(\frac{1}{a}+\frac{1}{b}\right)}}{4\pi} &\leq \left(\dint_{\R^n}{\|x\|^{2a}\ |f(x)|^2}\ dx\right)^{\frac{1}{2a}}\left(\dint_{\R^n}{\|y\|^{2b}\ |\widehat{f}(y)|^2}\ dy\right)^{\frac{1}{2b}},&
\end{flalign*}
where $\|\cdot\|_2$ denotes the $L^2$-norm and $\|\cdot\|$ denotes the Euclidean norm. 
\end{thm}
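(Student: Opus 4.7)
The plan is to reduce the weighted version to the classical inequality \eqref{heq-Rn} by using H\"older's inequality to bound the second-moment integrals by the higher-moment ones, with the deficit absorbed into a power of $\|f\|_2$.

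First, I would treat the position side. For $a\geq 1$, write the integrand as $\|x\|^{2}|f(x)|^{2}=\bigl(\|x\|^{2a}|f(x)|^{2}\bigr)^{1/a}\cdot |f(x)|^{2(a-1)/a}$ and apply H\"older's inequality with conjugate exponents $a$ and $a/(a-1)$ to obtain
\begin{equation*}
\int_{\R^n}\|x\|^{2}|f(x)|^{2}\,dx\;\leq\;\left(\int_{\R^n}\|x\|^{2a}|f(x)|^{2}\,dx\right)^{1/a}\|f\|_{2}^{2(a-1)/a}.
\end{equation*}
Applying exactly the same trick on the frequency side, with $b\geq 1$ and using $\|\widehat f\|_{2}=\|f\|_{2}$ from Plancherel, gives
\begin{equation*}
\int_{\R^n}\|y\|^{2}|\widehat f(y)|^{2}\,dy\;\leq\;\left(\int_{\R^n}\|y\|^{2b}|\widehat f(y)|^{2}\,dy\right)^{1/b}\|f\|_{2}^{2(b-1)/b}.
\end{equation*}

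Next, I would substitute these two bounds into the classical Heisenberg inequality \eqref{heq-Rn} of Theorem~\ref{theq-Rn}. Taking square roots of both H\"older estimates and multiplying, the right-hand side becomes
\begin{equation*}
\left(\int_{\R^n}\|x\|^{2a}|f(x)|^{2}\,dx\right)^{1/(2a)}\left(\int_{\R^n}\|y\|^{2b}|\widehat f(y)|^{2}\,dy\right)^{1/(2b)}\|f\|_{2}^{(a-1)/a+(b-1)/b}.
\end{equation*}

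Finally, assuming $f\not\equiv 0$ (otherwise the inequality is trivial), I would divide both sides by $\|f\|_{2}^{(a-1)/a+(b-1)/b}$. The remaining power of $\|f\|_{2}$ on the left is $2-(1-\tfrac1a)-(1-\tfrac1b)=\tfrac1a+\tfrac1b$, which is exactly the exponent in the claim. There is no real obstacle here; the only thing to be careful about is the choice of H\"older exponents so that the deficit in $\|f\|_{2}$ matches $\tfrac1a+\tfrac1b$ after division, and the use of Plancherel to interpret $\|\widehat f\|_{2}$ as $\|f\|_{2}$ when extracting the $L^{2}$ factor on the frequency side.
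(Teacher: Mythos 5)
Your proposal is correct and is exactly the route the paper indicates: it states that this theorem follows from H\"older's inequality together with the classical inequality \eqref{heq-Rn}, and the detailed version of this very computation (H\"older with exponents $a,\ a/(a-1)$ on the position side and $b,\ b/(b-1)$ on the frequency side, with Plancherel supplying $\|\widehat f\|_2=\|f\|_2$, then dividing by the surplus power of $\|f\|_2$) appears in the proof of Theorem~\ref{theq-RnK}. The exponent bookkeeping $2-(1-\tfrac1a)-(1-\tfrac1b)=\tfrac1a+\tfrac1b$ checks out.
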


In section $2$, we shall prove a generalized analogue of Heisenberg uncertainty inequality for $\R^n \times K$, where $K$ is a separable unimodular locally compact group of type I. In the next section, a generalized analogue of Heisenberg uncertainty inequality for Euclidean motion group $M(n)$ is proved. The last section deals with a generalized analogue of Heisenberg uncertainty inequality for several general classes of nilpotent Lie groups for which the Hilbert-Schmidt norm of the group Fourier transform $\pi_\xi(f)$ of $f$ attains a particular form. Theses classes include thread-like nilpotent Lie groups, $2$-NPC nilpotent Lie groups and several low-dimensional nilpotent Lie groups. 
\section{$\R^n \times K$, $K$ a locally compact group}
Consider $G=\R^n \times K$, where $K$ is a separable unimodular locally compact group of type I. The Haar measure of $G$ is $dg=dx\ dk$, where $dx$ is Lebesgue measure on $\R^n$ and $dk$ is the left Haar measure on $K$. The dual $\widehat{G}$ of $G$ is $\R^n \times \widehat{K}$, where $\widehat{K}$ is the dual space of $K$. 

The Fourier transform of $f \in L^2(G)$ is given by,
\begin{flalign*}
&& \widehat{f}(y,\sigma)&=\dint_{\R^n}\dint_{K}{f(x,k)\ e^{-2\pi i \langle{x,y}\rangle}\ \sigma(k^{-1})}\ dk\ dx, &
\end{flalign*} 
for $(y,\sigma) \in \R^n \times \widehat{K}$.  

\begin{thm} \label{theq-RnK}
For any $f \in L^2(\R^n \times K)$ (where $K$ is a separable unimodular locally compact group of type I) and $a,b \geq 1$, we have
\begin{flalign}
\noindent \dfrac{n\|f\|_2^{\left(\frac{1}{a}+\frac{1}{b}\right)}}{4\pi} &\leq \left(\dint_{\R^n}\dint_{K}{\|x\|^{2a}\ |f(x,k)|^2}\ dk\ dx\right)^{\frac{1}{2a}}\left(\dint_{\R^n}\dint_{\widehat{K}}{\|y\|^{2b}\ \|\widehat{f}(y,\sigma)\|_{\text{HS}}^2}\ dy\ d\sigma\right)^{\frac{1}{2b}}. &\label{heq-RnK}
\end{flalign}
\end{thm}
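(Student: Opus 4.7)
My plan is to reduce the inequality on $\R^n \times K$ to the scalar Heisenberg inequality on $\R^n$ by freezing the $K$-variable and then using Plancherel on $K$ to recycle the Hilbert--Schmidt norm on the Fourier side. Concretely, I would first define the partial Fourier transform in the $\R^n$-variable,
\[
F(y,k) \;=\; \int_{\R^n} f(x,k)\, e^{-2\pi i \langle x,y\rangle}\, dx,
\]
so that $\widehat{f}(y,\sigma) = \int_K F(y,k)\, \sigma(k^{-1})\, dk$. For $y$ fixed, the Plancherel theorem on the unimodular type I group $K$ gives
\[
\int_K |F(y,k)|^2\, dk \;=\; \int_{\widehat{K}} \|\widehat{f}(y,\sigma)\|_{\mathrm{HS}}^2\, d\sigma,
\]
and hence, after multiplying by $\|y\|^{2b}$ and integrating in $y$, the second factor on the right of \eqref{heq-RnK} equals $\int_{\R^n}\!\int_K \|y\|^{2b}\, |F(y,k)|^2\, dy\, dk$. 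A similar Plancherel identity at $y=0$ yields $\|f\|_2^2 = \int_K \|f(\cdot,k)\|_{L^2(\R^n)}^2\, dk$.

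Next, for almost every $k$, I would apply the generalized Heisenberg inequality on $\R^n$ (the theorem stated just before the target) to $h_k(x) := f(x,k)$, whose $\R^n$-Fourier transform is $F(\cdot,k)$. Writing
\[
A_k = \int_{\R^n} \|x\|^{2a}|f(x,k)|^2\, dx,\qquad B_k = \int_{\R^n} \|y\|^{2b}|F(y,k)|^2\, dy,\qquad C_k = \|h_k\|_2^2,
\]
this gives $\bigl(\tfrac{n}{4\pi}\bigr) C_k^{(a+b)/(2ab)} \le A_k^{1/(2a)} B_k^{1/(2b)}$. Raising to the power $2ab/(a+b)$ converts this to
\[
\left(\frac{n}{4\pi}\right)^{\!2ab/(a+b)} C_k \;\le\; A_k^{b/(a+b)}\, B_k^{a/(a+b)},
\]
which is the key pointwise (in $k$) estimate.

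Then I would integrate over $K$ and apply H\"older's inequality with conjugate exponents $(a+b)/b$ and $(a+b)/a$ to the right-hand side, obtaining
\[
\left(\frac{n}{4\pi}\right)^{\!2ab/(a+b)}\! \|f\|_2^2 \;\le\; \left(\int_K A_k\, dk\right)^{\!b/(a+b)}\!\left(\int_K B_k\, dk\right)^{\!a/(a+b)}.
\]
Substituting the two Fubini/Plancherel identifications from the first paragraph and finally raising both sides to the power $(a+b)/(2ab)$ (noting that $(1/a+1/b)\cdot 2ab/(a+b)=2$, so the exponent on $\|f\|_2^2$ becomes $(1/a+1/b)/2$) produces exactly \eqref{heq-RnK}.

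The only real obstacle is bookkeeping: choosing the correct exponents so that a single H\"older step matches the $A_k^{1/(2a)} B_k^{1/(2b)}$ split of the one-dimensional Heisenberg inequality, and checking that the Fubini interchange and the vector-valued Plancherel identity on $K$ apply — both of which are standard under the separability and type I hypotheses on $K$. A brief density argument (working first with $f \in L^1 \cap L^2$ and passing to the limit when the weighted integrals on the right are finite) handles the definition of $\widehat{f}$; if either factor on the right of \eqref{heq-RnK} is infinite, the inequality is trivial.
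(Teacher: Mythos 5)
Your proposal is correct, and it reaches \eqref{heq-RnK} by a genuinely different arrangement of the same ingredients. The paper applies only the basic ($a=b=1$) Heisenberg inequality \eqref{heq-Rn} to each fiber $f_k$, integrates in $k$ and uses Cauchy--Schwarz in $k$ to obtain the unweighted-exponent inequality on the product, identifies $\int_{A'}|\widehat{f_k}(y)|^2\,dk$ with $\int_{\widehat K}\|\widehat f(y,\sigma)\|_{\text{HS}}^2\,d\sigma$ via Plancherel on $K$, and only then upgrades $\|x\|^2\to\|x\|^{2a}$ and $\|y\|^2\to\|y\|^{2b}$ by two separate H\"older steps carried out on the full product space. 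You instead feed the already-generalized $(a,b)$ inequality on $\R^n$ into each fiber, linearize by raising to the power $2ab/(a+b)$, and absorb the integration over $K$ in a single H\"older step with conjugate exponents $(a+b)/b$ and $(a+b)/a$; your exponent bookkeeping is consistent ($\tfrac{1}{2a}\cdot\tfrac{2ab}{a+b}=\tfrac{b}{a+b}$, and $(\|f\|_2^2)^{(a+b)/(2ab)}=\|f\|_2^{1/a+1/b}$), and the Plancherel/Fubini identification of the Hilbert--Schmidt factor is the same as the paper's. Your route is tidier (one H\"older application versus Cauchy--Schwarz plus two), at the cost of taking the generalized $\R^n$ inequality as the input, which the paper states but does not use in this proof; conversely the paper's version needs only \eqref{heq-Rn}. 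Two small points of care: the identity $\|f\|_2^2=\int_K\|f(\cdot,k)\|_{L^2(\R^n)}^2\,dk$ is just Fubini (no Plancherel needed there), and the identity $\widehat f(y,\sigma)=\int_K F(y,k)\sigma(k^{-1})\,dk$ for a.e.\ $y$ does require the $L^1\cap L^2$ density argument you mention, exactly as in the paper's step establishing $\F_2\F_1 f=\widehat f$.
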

\begin{proof}
Without loss of generality, we may assume that both the integrals on right hand side of \eqref{heq-RnK} are finite.\\
Given that $f \in L^2(\R^n \times K)$, there exists $A \subseteq K$ of measure zero such that for $k \in K\setminus A=A'$ (say), we have
\begin{flalign*}
&&\dint\limits_{\R^n}{|f(x,k)|^2}\ dx &< \infty.&
\end{flalign*}
For all $k \in A'$, we define $f_k(x)=f(x,k)$, for every $x\in \R^n$. \\
Clearly, for all $k\in A'$, $f_k \in L^2(\R^n)$ and for all $y \in \R^n$,
\begin{flalign*}
&&\widehat{f_k}(y)&=\dint\limits_{\R^n}{f(x,k)\ e^{-2\pi i \langle{x,y}\rangle}}\ dy=\F_1 f(y,k).&
\end{flalign*}
By Theorem \ref{theq-Rn}, we have
\begin{flalign*}
&&\dfrac{n}{4\pi}\dint_{\R^n}{|f(x,k)|^2}\ dx&\leq\left(\dint_{\R^n}{\|x\|^2\ |f_k(x)|^2}\ dx\right)^{1/2}\left(\dint_{\R^n}{\|y\|^2\ |\widehat{f_k}(y)|^2}\ dy\right)^{1/2}. & 
\end{flalign*}
Integrating both sides with respect to $dk$, we obtain
\begin{flalign*}
&&\dfrac{n}{4\pi}\dint\limits_{A'}\dint\limits_{\R^n}{|f(x,k)|^2}\ dx\ dk&\leq \dint_{A'}\left(\dint_{\R^n}{\|x\|^2\ |f_k(x)|^2}\ dx\right)^{1/2}\times &\\
&&&\qquad \left(\dint_{\R^n}{\|y\|^2\ |\widehat{f_k}(y)|^2}\ dy\right)^{1/2}\ dk.&  
\end{flalign*}
The integral on the L.H.S. is equal to $\|f\|_2^2$, so using Cauchy Schwarz inequality and Fubini's theorem, we have
\begin{flalign}
&&\dfrac{n\|f\|_2^2}{4\pi}&\leq \left(\dint_{K}\dint_{\R^n}{\|x\|^2\ |f(x,k)|^2}\ dx\ dk\right)^{1/2}\left(\dint_{\R^n}\|y\|^2\dint_{A'}{|\widehat{f_k}(y)|^2}\ dk\ dy\right)^{1/2}. \label{step1-RnK}&
\end{flalign}
Now, using H\"older's inequality, we have
\begin{flalign*}
&\left(\dint_{\R^n}\dint_{K}{\|x\|^{2a}\ |f(x,k)|^2}\ dk\ dx\right)^{\frac{1}{a}}\left(\dint_{\R^n}\dint_{K}{|f(x,k)|^2}\ dk\ dx\right)^{1-\frac{1}{a}} &\\
&\geq \dint_{\R^n}\dint_{K}{\|x\|^2\ |f(x,k)|^{\frac{2}{a}}|f(x,k)|^{2\left(1-\frac{1}{a}\right)}}\ dk\ dx  &\\
&= \dint_{\R^n}\dint_{K}{\|x\|^2\ |f(x,k)|^2}\ dk\ dx,  &
\end{flalign*}
which implies
\begin{flalign}
\dint_{\R^n}\dint_{K}{\|x\|^2\ |f(x,k)|^2}\ dk\ dx &\leq \left(\dint_{\R^n}\dint_{K}{\|x\|^{2a}\ |f(x,k)|^2}\ dk\ dx\right)^{\frac{1}{a}}\left(\|f\|_2^2\right)^{1-\frac{1}{a}}.  \label{step2-RnK}&
\end{flalign}
Combining \eqref{step1-RnK} and \eqref{step2-RnK}, we obtain
\begin{flalign}
\dfrac{n\|f\|_2^2}{4\pi}&\leq \left(\dint_{\R^n}\dint_{K}{\|x\|^{2a}\ |f(x,k)|^2}\ dk\ dx\right)^{\frac{1}{2a}}\left(\|f\|_2^2\right)^{\frac{1}{2}-\frac{1}{2a}} \left(\dint_{\R^n}\|y\|^2\dint_{A'}{|\widehat{f_k}(y)|^2}\ dk\ dy\right)^{1/2}. \label{step3-RnK}&
\end{flalign}
\begin{flalign*}
\text{Since,}\ \ \dint_{\R^n}\dint_{A'}{|\F_1f(y,k)|^2}\ dy\ dk &=\dint_{\R^n}\dint_{A'}{|f(x,k)|^2}\ dx\ dk =\|f\|_2^2 < \infty, &
\end{flalign*}
therefore, $\F_1f \in L^2(\R^n\times A')$. So, $\F_2\F_1 f$ is well defined a.e. By approximating $f\in L^2(\R^n \times A')$ by functions in $L^1\cap L^2(\R^n\times A')$, we have 
\begin{flalign*}
&&\F_2\F_1 f&=\widehat{f},&
\end{flalign*}
for all $f \in L^2(\R^n \times A')$. Applying Plancherel formula on the locally compact group $K$, we have
\begin{flalign*}
&&\dint_{A'}{|\widehat{f_k}(y)|^2}\ dk &=\dint_{\widehat{K}}{\|\widehat{f}(y,\sigma)\|_{\text{HS}}^2}\ d\sigma. &
\end{flalign*}
Thus, \eqref{step3-RnK} can be written as
\begin{flalign}
\dfrac{n\|f\|_2^2}{4\pi}&\leq \left(\dint_{\R^n}\dint_{K}{\|x\|^{2a}\ |f(x,k)|^2}\ dk\ dx\right)^{\frac{1}{2a}}\left(\|f\|_2^2\right)^{\frac{1}{2}-\frac{1}{2a}} \left(\dint_{\R^n}\dint_{\widehat{K}}{\|y\|^2\|\widehat{f}(y,\sigma)\|_{\text{HS}}^2}\ dy\ d\sigma\right)^{1/2}.  \label{step4-RnK}&
\end{flalign}
Now, again using H\"older's inequality, we have
\begin{flalign*}
&\left(\dint_{\R^n}\dint_{\widehat{K}}{\|y\|^{2b}\ \|\widehat{f}(y,\sigma)\|_{\text{HS}}^2}\ dy\ d\sigma\right)^{\frac{1}{b}}\left(\dint_{\R^n}\dint_{\widehat{K}}{\|\widehat{f}(y,\sigma)\|_{\text{HS}}^2}\ dy\ d\sigma\right)^{1-\frac{1}{b}}&\\
&\geq \dint_{\R^n}\dint_{\widehat{K}}{\|y\|^2\ \|\widehat{f}(y,\sigma)\|_{\text{HS}}^{\frac{2}{b}}\ \|\widehat{f}(y,\sigma)\|_{\text{HS}}^{2\left(1-\frac{1}{b}\right)}}\ dy\ d\sigma & \\
&= \dint_{\R^n}\dint_{\widehat{K}}{\|y\|^2\ \|\widehat{f}(y,\sigma)\|_{\text{HS}}^2}\ dy\ d\sigma, &
\end{flalign*}
which implies
\begin{flalign}
&\dint_{\R^n}\dint_{\widehat{K}}{\|y\|^2\ \|\widehat{f}(y,\sigma)\|_{\text{HS}}^2}\ dy\ d\sigma\leq \left(\dint_{\R^n}\dint_{\widehat{K}}{\|y\|^{2b}\ \|\widehat{f}(y,\sigma)\|_{\text{HS}}^2}\ dy\ d\sigma\right)^{\frac{1}{b}}\left(\|f\|_2^2\right)^{1-\frac{1}{b}}.\label{step5-RnK}&
\end{flalign}
Combining \eqref{step4-RnK} and \eqref{step5-RnK}, we obtain
\begin{flalign*}
&&\dfrac{n\|f\|_2^2}{4\pi}&\leq \left(\dint_{\R^n}\dint_{K}{\|x\|^{2a}\ |f(x,k)|^2}\ dk\ dx\right)^{\frac{1}{2a}}\left(\|f\|_2^2\right)^{\frac{1}{2}-\frac{1}{2a}}\times &\\*
&&&\qquad \left(\dint_{\R^n}\dint_{\widehat{K}}{\|y\|^{2b}\ \|\widehat{f}(y,\sigma)\|_{\text{HS}}^2}\ dy\ d\sigma\right)^{\frac{1}{2b}}\left(\|f\|_2^2\right)^{\frac{1}{2}-\frac{1}{2b}}, &
\end{flalign*}
which implies
\begin{flalign*}
\dfrac{n\|f\|_2^{\left(\frac{1}{a}+\frac{1}{b}\right)}}{4\pi} &\leq \left(\dint_{\R^n}\dint_{K}{\|x\|^{2a}\ |f(x,k)|^2}\ dk\ dx\right)^{\frac{1}{2a}}\left(\dint_{\R^n}\dint_{\widehat{K}}{\|y\|^{2b}\ \|\widehat{f}(y,\sigma)\|_{\text{HS}}^2}\ dy\ d\sigma\right)^{\frac{1}{2b}}.&
\end{flalign*}
\end{proof}
\section{Euclidean Motion Group $M(n)$}
Consider $M(n)$ to be the semi-direct product of $\R^n$ with $K=SO(n)$. The group law is given by,
\begin{flalign*}
&&(z,k)(w,k')&=(z+k\cdot w, kk'),&
\end{flalign*}
for $z,w \in \R^n$ and $k,k' \in K$. The group $M(n)$ is called the \textit{Motion Group} of the Euclidean plane $\R^n$. 

As in \cite{Sar:Tha:04}, $M=SO(n-1)$ can be considered as a subgroup of $K$ leaving the point $e_1=(1,0,0,\ldots,0)$ fixed. All the irreducible unitary representations of $M(n)$ relevant for the Plancherel formula are parametrized (upto unitary equivalence) by pairs $(\lambda,\sigma)$, where $\lambda >0$ and $\sigma \in \widehat{M}$, the unitary dual of $M$.

Given $\sigma \in \widehat{M}$ realized on a Hilbert space $H_{\sigma}$ of dimension $d_\sigma$, consider the space,
\begin{flalign*}
L^2(K,\sigma)&=\left\{\varphi\ |\ \varphi: K \rightarrow M_{d_\sigma \times d_\sigma}, \dint{\|\varphi(k)\|^2}\ dk < \infty, \varphi(uk)=\sigma(u)\varphi(k),\ \text{for}\ u \in M\ \text{and}\ k\in K\right\}.&
\end{flalign*}
Note that $L^2(K,\sigma)$ is a Hilbert space under the inner product
\begin{flalign*}
&&\langle{\varphi,\psi}\rangle&=\dint_{K}{tr(\varphi(k)\psi(k)^\ast)}\ dk.&
\end{flalign*}
For each $\lambda >0$ and $\sigma \in \widehat{M}$, we can define a representation $\pi_{\lambda,\sigma}$ of $M(n)$ on $L^2(K,\sigma)$ as follows:

For $\varphi \in L^2(K,\sigma)$, $(z,k)\in M(n)$,
\begin{flalign*}
&&\pi_{\lambda,\sigma}(z,k)\varphi(u)&=e^{i\lambda\langle{u^{-1}\cdot e_1,z\rangle}}\ \varphi(uk),&
\end{flalign*}
for $u\in K$. 

If $\varphi_j(k)$ are the column vectors of $\varphi \in L^2(K,\sigma)$, then $\varphi_j(uk)=\sigma(u)\varphi_j(k)$ for all $u \in M$. Therefore, $L^2(K,\sigma)$ can be written as the direct sum of $d_\sigma$ copies of $H(K,\sigma)$, where 
\begin{flalign*}
H(K,\sigma)&=\left\{\varphi\ |\ \varphi: K \rightarrow \C^{d_\sigma}, \dint{\|\varphi(k)\|^2}\ dk < \infty, \varphi(uk)=\sigma(u)\varphi(k),\ \text{for}\ u \in M\ \text{and}\ k\in K\right\}.&
\end{flalign*}
It can be shown that $\pi_{\lambda,\sigma}$ restricted to $H(K,\sigma)$ is an irreducible unitary representation of $M(n)$. Moreover, any irreducible unitary representation of $M(n)$ which is infinite dimensional is unitarily equivalent to one and only one $\pi_{\lambda,\sigma}$.  \\ 
The Fourier transform of $f \in L^2(M(n))$ is given by,
\begin{flalign*}
&& \widehat{f}(\lambda,\sigma)&=\dint_{M(n)}{f(z,k)\ \pi_{\lambda,\sigma}(z,k)^\ast}\ dz\ dk. &
\end{flalign*} 
$\widehat{f}(\lambda,\sigma)$ is a Hilbert-Schmidt operator on $H(K,\sigma)$. 

A solid harmonic of degree $m$ is a polynomial which is homogeneous of degree $m$ and whose Laplacian is zero. The set of all such polynomials will be denoted by $\h_m$ and the restrictions of elements of $\h_m$ to $S^{n-1}$ is denoted by $S_m$. By choosing an orthonormal basis $\{g_{mj} : j = 1, 2,\ldots,d_m\}$ of $S_m$ for each $m = 0, 1, 2, \ldots$, we get an orthonormal basis for $L^2(S^{n-1})$. 

The Haar measure on $M(n)$ is $dg=dz\ dk$, where $dz$ is Lebesgue measure on $\R^n$ and $dk$ is the normalized Haar measure on $SO(n)$. 

The Plancherel formula on $M(n)$ is given as follows (See \cite{Kum:Oka:73}):
\begin{prop}[Plancherel Formula]
Let $f \in L^2(M(n))$, then \label{PF-Mn}
\begin{flalign*}
\dint_{M(n)}{\left|f(z_1,z_2,\ldots,z_n,k)\right|^2}\ dz_1\ dz_2\ \ldots\ \ dz_n\ dk&=c_n\dint_{0}^{\infty}\left(\sum_{\sigma \in \widehat{M}}{d_\sigma\|\widehat{f}(\lambda,\sigma)\|_{\text{HS}}^2}\right)\ \lambda^{n-1}\ d\lambda &
\end{flalign*}
where $c_n=\dfrac{2}{2^{n/2}\ \Gamma\left(\frac{n}{2}\right)}$.
\end{prop}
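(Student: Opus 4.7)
The plan is to reduce the Plancherel formula on $M(n)$ to the Plancherel formula on $\R^n$ combined with Peter--Weyl analysis on the compact group $K=SO(n)$, by exploiting Mackey's description of the irreducible representations $\pi_{\lambda,\sigma}$ as arising from the $SO(n)$-orbits in $\widehat{\R^n}\cong\R^n$ (the spheres of radius $\lambda>0$, with stabilizer $M=SO(n-1)$ at the basepoint $e_1$).

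First, I would take $f$ from a convenient dense subspace of $L^2(M(n))$, say $C_c(M(n))$, and perform the Euclidean Fourier transform in the $\R^n$-variable, obtaining
\[
\tilde f(\xi,k)=\dint_{\R^n}f(z,k)\,e^{-i\langle z,\xi\rangle}\,dz.
\]
The classical Plancherel theorem on $\R^n$ applied fibrewise in $k$, together with Fubini, expresses $\|f\|_2^2$ as a constant multiple of $\dint_K\dint_{\R^n}|\tilde f(\xi,k)|^2\,d\xi\,dk$. Switching to polar coordinates $\xi=\lambda\omega$ with $\lambda>0$, $\omega\in S^{n-1}$, and using the identification $S^{n-1}\cong K/M$ via $\omega=u^{-1}\cdot e_1$, produces the factor $\lambda^{n-1}\,d\lambda$ alongside an integration over $K/M$ against the normalized invariant measure.

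The heart of the argument is to identify, for each fixed $\lambda>0$, the resulting fibre integral with $\sum_{\sigma\in\widehat M}d_\sigma\|\widehat f(\lambda,\sigma)\|_{\mathrm{HS}}^2$. For this I would compute $\widehat f(\lambda,\sigma)$ explicitly as an integral operator on $H(K,\sigma)$: plugging the defining formula $\pi_{\lambda,\sigma}(z,k)\varphi(u)=e^{i\lambda\langle u^{-1}\cdot e_1,z\rangle}\varphi(uk)$ into $\widehat f(\lambda,\sigma)=\dint f(z,k)\pi_{\lambda,\sigma}(z,k)^{\ast}\,dz\,dk$ and performing a suitable change of variables in $K$ turns $\widehat f(\lambda,\sigma)$ into an integral operator whose kernel is built from $\tilde f(\lambda\,(\cdot)^{-1}\!\cdot\!e_1,\cdot)$, with the $M$-equivariance condition $\varphi(uk)=\sigma(u)\varphi(k)$ imposing a Frobenius-type constraint. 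The Hilbert--Schmidt norm is then the $L^2$-norm of this kernel on the relevant quotient; summing $d_\sigma\|\widehat f(\lambda,\sigma)\|_{\mathrm{HS}}^2$ over $\sigma\in\widehat M$ amounts to the Peter--Weyl/Plancherel identity on $K$ applied in the variable $k$, in view of the decomposition $L^2(K)\cong\bigoplus_{\sigma\in\widehat M}d_\sigma\,H(K,\sigma)$ coming from induction from $M$ to $K$.

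The main obstacle is the bookkeeping of normalizing constants: one must track (i) the Euclidean Plancherel factor attached to the chosen Fourier convention, (ii) the surface area of $S^{n-1}$ emerging from polar coordinates, and (iii) the normalizations of Haar measure on $K$ and $M$ used in the decomposition of $L^2(K)$, and verify that they combine to exactly $c_n=2/(2^{n/2}\Gamma(n/2))$. Once the identity is proved on $C_c(M(n))$, a standard continuity/density argument extends it to all of $L^2(M(n))$.
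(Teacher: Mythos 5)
The paper does not actually prove this proposition: it is quoted as a known result with a pointer to Kumahara--Okamoto, so there is no internal proof to compare against. Your outline is the standard argument for that result and is sound in structure. In particular, the computation you sketch does work out: writing $(z,k)^{-1}=(-k^{-1}\cdot z,\,k^{-1})$ and substituting $v=uk^{-1}$ exhibits $\widehat f(\lambda,\sigma)$ as the integral operator on $H(K,\sigma)$ with scalar kernel $K_\lambda(u,v)=\tilde f(\lambda\, v^{-1}\!\cdot e_1,\,v^{-1}u)$; since $K_\lambda(mu,mv)=K_\lambda(u,v)$ for $m\in M$ (because $m$ fixes $e_1$), this operator commutes with left translation by $M$, preserves each isotypic block of $L^2(K)\cong\bigoplus_{\sigma\in\widehat M}H_\sigma\otimes H(K,\sigma)$, and the identity $\sum_\sigma d_\sigma\|\widehat f(\lambda,\sigma)\|_{\mathrm{HS}}^2=\dint_K\dint_K|K_\lambda(u,v)|^2\,du\,dv$ is exactly the Peter--Weyl bookkeeping you invoke; the change of variables $\xi=\lambda\,v^{-1}\!\cdot e_1$ then recovers the Euclidean Plancherel integral. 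The one place where care is genuinely needed is the constant: with the paper's mixed conventions (the representations use $e^{i\lambda\langle\cdot,\cdot\rangle}$ with no $2\pi$, while Section 2 uses $e^{-2\pi i\langle\cdot,\cdot\rangle}$, and $dk$ is normalized Haar measure on $SO(n)$) the surface-area and Fourier factors do not visibly combine to $c_n=2/(2^{n/2}\Gamma(n/2))$ without fixing a normalization of the Plancherel measure $\lambda^{n-1}d\lambda$; you correctly flag this as the remaining verification rather than glossing over it, but to turn the proposal into a proof you would have to pin down those conventions explicitly and carry the resulting power of $2\pi$ through to the end.
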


We shall now state and prove the following generalized Heisenberg uncertainty inequality for Fourier transform on $M(n)$:
\begin{thm}\label{theq-Mn}
For any $f \in L^2(M(n))$ and $a,b \geq 1$, we have
\begin{flalign}
\dfrac{\|f\|_2^{\left(\frac{1}{a}+\frac{1}{b}\right)}}{2\sqrt{c_n}}&\leq\left(\dint_{K}\dint_{\R^n}{\|z\|^{2a}\ |f(z,k)|^2}\ dz\ dk\right)^{\frac{1}{2a}}\left(\dint_{0}^{\infty}\sum_{\sigma\in \widehat{M}}{d_\sigma\ \lambda^{2b}\|\widehat{f}(\lambda,\sigma)\|_{\text{HS}}^2}\ \lambda^{n-1}\ d\lambda\right)^{\frac{1}{2b}}. \label{heq-Mn}&
\end{flalign}
\end{thm}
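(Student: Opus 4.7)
The plan is to mirror the proof of Theorem \ref{theq-RnK} almost line for line, with $K = SO(n)$ in the role of the compact factor and Proposition \ref{PF-Mn} replacing the Plancherel formula on $K$; the one genuinely new ingredient is a fiber-Plancherel identity that bridges the partial Euclidean Fourier transform in the translation variable and the group Fourier transform $\widehat f(\lambda,\sigma)$.

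First I would slice: for $f \in L^2(M(n))$, Fubini produces a conull $A' \subseteq K$ such that $f_k(z) := f(z,k) \in L^2(\R^n)$ for $k \in A'$, and $\F_1 f(y,k) := \widehat{f_k}(y)$ is the partial Euclidean Fourier transform. Applying Theorem \ref{theq-Rn} to each $f_k$, integrating against $dk$, and running the Cauchy-Schwarz/Fubini argument that produced \eqref{step1-RnK} yields
\begin{flalign*}
\dfrac{n\|f\|_2^2}{4\pi} &\leq \left(\dint_K\dint_{\R^n} \|z\|^2\ |f(z,k)|^2\ dz\ dk\right)^{\!1/2} \left(\dint_{A'}\dint_{\R^n} \|y\|^2\ |\F_1 f(y,k)|^2\ dy\ dk\right)^{\!1/2}.&
\end{flalign*}

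The central step, which is also the main obstacle, is to relate the last factor to the spectral side of \eqref{heq-Mn}. Passing to polar coordinates $y = \eta\omega$ with $\omega \in S^{n-1}$, I would invoke the fiber-Plancherel identity
\begin{flalign*}
\dint_{A'}\dint_{S^{n-1}} |\F_1 f(\eta\omega,k)|^2\ d\omega\ dk &= \kappa_n \sum_{\sigma \in \widehat{M}} d_\sigma\ \|\widehat f(\alpha\eta,\sigma)\|_{\text{HS}}^2 \quad \text{for a.e. } \eta > 0,&
\end{flalign*}
in which $\alpha$ and $\kappa_n$ are explicit normalization constants dictated by the $2\pi$-scaling between the Euclidean frequency $y$ (whose Fourier transform uses $e^{-2\pi i \langle z,y\rangle}$) and the spectral parameter $\lambda$ appearing in $\pi_{\lambda,\sigma}(z,k)\varphi(u) = e^{i\lambda \langle u^{-1}e_1, z\rangle}\varphi(uk)$. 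Morally, this is Proposition \ref{PF-Mn} \emph{disintegrated} along the $K$-orbits $\eta S^{n-1} \subseteq \widehat{\R^n}$; it can be proved by expanding the $\omega$-dependence of $\F_1 f(\eta\omega, k)$ in the spherical-harmonic basis $\{g_{mj}\}$ introduced before Proposition \ref{PF-Mn} and matching $M$-isotypic components against the concrete model of $\pi_{\lambda,\sigma}$ on $H(K,\sigma)$ via Peter-Weyl and Frobenius reciprocity. The nontrivial work is exactly this identity together with careful bookkeeping of the normalization constants, which must collapse to $c_n$.

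Once the fiber identity is in place, the change of variables $\lambda = \alpha\eta$ converts the integral of $\|y\|^2 |\F_1 f(y,k)|^2$ into a constant multiple of $\dint_0^\infty \sum_\sigma d_\sigma \lambda^2 \|\widehat f(\lambda,\sigma)\|_{\text{HS}}^2 \lambda^{n-1}\,d\lambda$. The proof then closes mechanically by two applications of H\"older's inequality exactly as in \eqref{step2-RnK} and \eqref{step5-RnK}: with exponents $a$ and $b$ respectively, the weights $\|z\|^2$ and $\lambda^2$ get upgraded to $\|z\|^{2a}$ and $\lambda^{2b}$ at the cost of factors $\|f\|_2^{2 - 2/a}$ and $\|f\|_2^{2 - 2/b}$, which combine with the initial $\|f\|_2^2$ to produce $\|f\|_2^{1/a + 1/b}$ on the left-hand side, yielding \eqref{heq-Mn} with the prefactor $1/(2\sqrt{c_n})$.
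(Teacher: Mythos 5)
Your overall skeleton (slice over $k\in K$, apply the Euclidean Heisenberg inequality to each $f_k$, Cauchy--Schwarz in $k$, then two H\"older upgrades from the weights $\|z\|^2,\lambda^2$ to $\|z\|^{2a},\lambda^{2b}$) matches the paper's, and the H\"older endgame is identical to \eqref{step2-RnK}/\eqref{step5-RnK}. But you handle the one genuinely hard step --- converting $\dint_K\dint_{\R^n}\|y\|^2|\F_1f(y,k)|^2\,dy\,dk$ into the spectral quantity $\dint_0^\infty\sum_\sigma d_\sigma\lambda^2\|\widehat f(\lambda,\sigma)\|_{\text{HS}}^2\lambda^{n-1}d\lambda$ --- by a completely different mechanism than the paper. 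You posit an exact fiberwise disintegration of Proposition \ref{PF-Mn} over the spheres $\eta S^{n-1}$. The paper instead avoids any disintegration: it works only with the coordinate weight $|w_1|^2$, observes that $2\pi i w_1\F_1f$ is the partial Fourier transform of $\partial f/\partial z_1$, applies the \emph{integrated} Plancherel formula of Proposition \ref{PF-Mn} to $\partial f/\partial z_1$, computes $\bigl(\partial f/\partial z_1\bigr)^{\widehat{\ }}(\lambda,\sigma)g(u)=i\lambda u_{11}\,\widehat f(\lambda,\sigma)g(u)$, and then uses $|u_{11}|\le 1$ to get $\|(\partial f/\partial z_1)^{\widehat{\ }}(\lambda,\sigma)\|_{\text{HS}}^2\le\lambda^2\|\widehat f(\lambda,\sigma)\|_{\text{HS}}^2$. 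This costs only an inequality (which is all that is needed) and requires no new Plancherel-type identity, though it does force a reduction to Schwartz functions via a density argument in a weighted Banach space $B$, which the paper carries out and which your route would not need.

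The gap in your write-up is that the fiber-Plancherel identity is asserted, not proved, and it is not a routine citation: it is essentially the entire technical content of your proof. I believe the identity is true (the representation $\pi_{\lambda,\sigma}$ restricted to $\R^n$ is supported on the sphere of radius $\lambda$, so $\widehat f(\lambda,\sigma)$ depends only on $\F_1f$ restricted to $|y|=\lambda/2\pi$, and a Peter--Weyl/Frobenius count shows the map from $L^2(S^{n-1}\times K)$ to $\bigoplus_\sigma d_\sigma$ copies of the Hilbert--Schmidt operators on $H(K,\sigma)$ is an isometry up to normalization), and if established it would even yield the sharper constant $n/(2\sqrt{c_n})$ on the left of \eqref{heq-Mn}. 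But verifying that the intertwiner is isometric on \emph{every} isotypic component with the \emph{same} constant, and that the constants collapse to $c_n$, requires a real argument (Schur's lemma alone gives a possibly component-dependent scalar), and none of this is supplied. As written, your proof reduces the theorem to an unproved lemma that is at least as hard as the theorem itself; you should either prove the disintegration or switch to the paper's derivative trick, which gets by with the already-stated Proposition \ref{PF-Mn}.
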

\begin{proof}
Consider the norm $\|\cdot\|$ on $L^2(M(n))$ defined by
\begin{flalign*}
\|f\|:&=\left(\dint_{\R^n}\dint_{K}{(1+\|z\|^{2a})\ |f(z,k)|^2}\ dz\ dk\right)^{1/2}+\left(\dint_{0}^{\infty}\sum_{\sigma\in \widehat{M}}{d_\sigma (1+\lambda^{2b})\|\widehat{f}(\lambda,\sigma)\|_{\text{HS}}^2}\ \lambda^{n-1}\ d\lambda\right)^{1/2}.&
\end{flalign*}
This gives us a Banach space $B=\{f\in L^2(G):\|f\|<\infty\}$, which is contained in $L^2(M(n))$ and the space $\CS(M(n))$ of $C^\infty$-functions which are rapidly decreasing on $M(n)$ can be shown to be dense in $B$. It suffices to prove the inequality of Theorem \ref{theq-Mn} for functions in $\CS(M(n))$, it is automatically valid for any $f\in B$. If $0\neq f \in L^2(M(n))\setminus B$, then the right hand side of the inequality is always $+\infty$ and the inequality is trivially valid.\\
Let $f\in \CS(M(n))$. Assuming that both the integrals on right hand side of \eqref{heq-Mn} are finite, we have
\begin{flalign*}
&&\dint\limits_{\R^n}{|f(z,k)|^2}\ dz &< \infty,\ \text{for all}\ k\in K.&
\end{flalign*}
For $k \in K$, we define $f_k(z)=f(z,k)$, for every $z \in\R^n$. \\
Clearly, $f_k \in L^2(\R^n)$, for all $k\in K$. \\
Take $z=(z_1,z_2,\ldots,z_n)$ and $w=(w_1,w_2,\ldots,w_n)$. \\
By Heisenberg inequality on $\R^n$, we have
\begin{flalign*}
&&\dfrac{\|f_k\|_2^2}{4\pi}&\leq\left(\dint_{\R^n}{|z_1|^2\ |f_k(z)|^2}\ dz\right)^{1/2}\left(\dint_{\R^n}{|w_1|^2\ |\widehat{f_k}(w)|^2}\ dw\right)^{1/2} & \\
&\Rightarrow &\dfrac{1}{4\pi}\dint_{\R^n}{|f(z,k)|^2}\ dz &\leq\left(\dint_{\R^n}{|z_1|^2\ |f(z,k)|^2}\ dz\right)^{1/2}\left(\dint_{\R^n}{|w_1|^2\ |\widehat{f_k}(w)|^2}\ dw\right)^{1/2}  & 
\end{flalign*}
Integrating both sides with respect to $dk$, we get
\begin{flalign*}
\dfrac{1}{4\pi}\dint_{K}\dint_{\R^n}{|f(z,k)|^2}\ dz\ dk &\leq\dint_{K}\left(\dint_{\R^n}{|z_1|^2\ |f(z,k)|^2}\ dz\right)^{1/2}\left(\dint_{\R^n}{|w_1|^2\ |\widehat{f_k}(w)|^2}\ dw\right)^{1/2}\ dk & 
\end{flalign*}
which implies
\begin{flalign}
\dfrac{\|f\|_2^2}{4\pi}&\leq\dint_{K}\left(\dint_{\R^n}{|z_1|^2\ |f(z,k)|^2}\ dz\right)^{1/2}\left(\dint_{\R^n}{|w_1|^2\ |\widehat{f_k}(w)|^2}\ dw\right)^{1/2}\ dk \nonumber&\\
&\leq\left(\dint_{K}\dint_{\R^n}{|z_1|^2\ |f(z,k)|^2}\ dz\ dk\right)^{1/2}\left(\dint_{K}\dint_{\R^n}{|w_1|^2\ |\widehat{f_k}(w)|^2}\ dw\ dk\right)^{1/2} \nonumber\hspace{-20pt}&\\
&\hspace{190pt}[\text{By Cauchy Schwarz Inequality}] \nonumber&\\
&\leq\left(\dint_K\dint_{\R^n}{\|z\|^2\ |f(z,k)|^2}\ dz\ dk\right)^{1/2}\left(\dint_{K}\dint_{\R^n}{|w_1|^2\ |\widehat{f_k}(w)|^2}\ dw\ dk\right)^{1/2}. \label{step01-Mn}\hspace{-20pt}&
\end{flalign}
Now,
\begin{flalign}
&\left(\dint_{K}\dint_{\R^n}{\|z\|^{2a}\ |f(z,k)|^2}\ dz\ dk\right)^{\frac{1}{a}}\left(\dint_{K}\dint_{\R^n}{|f(z,k)|^2}\ dz\ dk\right)^{1-\frac{1}{a}}\nonumber& \\
&=\left(\dint_{K}\dint_{\R^n}{\left(\|z\|^2\ |f(z,k)|^{\frac{2}{a}}\right)^{a}}\ dz\ dk\right)^{\frac{1}{a}}\left(\dint_{K}\dint_{\R^n}{\left(|f(z,k)|^{2\left(1-\frac{1}{a}\right)}\right)^{\frac{1}{\left(1-\frac{1}{a}\right)}}}\ dz\ dk\right)^{1-\frac{1}{a}}\nonumber& \\
&\geq \dint_{K}\dint_{\R^n}{\|z\|^2\ |f(z,k)|^{\frac{2}{a}}|f(z,k)|^{2\left(1-\frac{1}{a}\right)}}\ dz\ dk \hspace{65pt} [\text{By Holder's Inequality}]  \nonumber&\\
&= \dint_{K}\dint_{\R^n}{\|z\|^2\ |f(z,k)|^2}\ dz\ dk   \label{step02-Mn}&
\end{flalign}
Combining \eqref{step01-Mn} and \eqref{step02-Mn}, we get
\begin{flalign}
\dfrac{\|f\|_2^2}{4\pi}&\leq\left(\dint_{K}\dint_{\R^n}{\|z\|^{2a}\ |f(z,k)|^2}\ dz\ dk\right)^{\frac{1}{2a}}\left(\|f\|_2^2\right)^{\frac{1}{2}-\frac{1}{2a}}\nonumber &\\
&\qquad \times\left(\dint_{K}\dint_{\R^n}{|w_1|^2\ |\widehat{f_k}(w)|^2}\ dw\ dk\right)^{1/2} \label{step3-Mn}&
\end{flalign}
Now, using Plancherel formula on $\R^n$, we have
\begin{flalign}
&\dint_{K}\dint_{\R^n}{|w_1|^2\ |\widehat{f_k}(w)|^2}\ dw\ dk \nonumber &\\
&=\dint_K\dint_{\R^n}{|w_1|^2\ \left|\dint_{\R^n}{f(z,k)\ e^{-2\pi i\langle{z,w}\rangle}}\ dz\right|^2}\ dw\ dk\nonumber &\\
&=\dint_K\dint_{\R^n}{|w_1|^2\ |\F_{1,2,\ldots, n} f(w_1,w_2,\ldots,w_n,k)|^2}\ dw_1\ dw_2\ \ldots\ dw_n\ dk \nonumber &\\
&=\dint_K\dint_{\R^n}{|w_1|^2\ |\F_{1} f(w_1,z_2,\ldots,z_n,k)|^2}\ dw_1\ dz_2\ \ldots\ dz_n\ dk. \label{step4-Mn}&
\end{flalign}
Since, $\dfrac{\partial f}{\partial z_1}\in \CS(M(n))$, we have 
\begin{flalign*}
&&\dint_{\R}{\left|\dfrac{\partial f}{\partial z_1}(z_1,z_2,\ldots,z_n,k)\right|^2}\ dz_1&< \infty,&
\end{flalign*}
for all $z_i \in \R$ and $k \in K$.\\
So, $w_1\F_1 f(w_1,z_2,\ldots,z_n,k)\in L^2(\R)$ and
\begin{flalign*}
&&\left(\dfrac{\partial f}{\partial z_1}(z_1,z_2,\ldots,z_n,k)\right)^{\widehat{\ }}(w_1)&=2\pi i w_1\F_1 f(w_1,z_2,\ldots,z_n,k).&
\end{flalign*}
for all $z_i \in \R$ and $k \in K$. Then,
\begin{flalign*}
&&\dint_{\R}{|w_1|^2\ |\F_1 f(w_1,z_2,\ldots,z_n,k)|^2}\ dw_1&=\dfrac{1}{4\pi^2}\dint_{\R}{\left|\dfrac{\partial f}{\partial z_1}(z_1,z_2,\ldots,z_n,k)\right|^2}\ dz_1, &
\end{flalign*}
which implies
\begin{flalign}
&\dint_{K}\dint_{\R^n}{|w_1|^2\ |\F_1 f(w_1,z_2,\ldots,z_n,k)|^2}\ dw_1\ dz_2\ \ldots\ dz_n\ dk \nonumber&\\
&=\dfrac{1}{4\pi^2}\dint_{K}\dint_{\R^n}{\left|\dfrac{\partial f}{\partial z_1}(z_1,z_2,\ldots,z_n,k)\right|^2}\ dz_1\ dz_2\ \ldots\ dz_n\ dk. \label{step5-Mn}&
\end{flalign}
By Proposition \ref{PF-Mn}, we obtain
\begin{flalign}
&\dint_K\dint_{\R^n}{\left|\dfrac{\partial f}{\partial z_1}(z_1,z_2,\ldots,z_n,k)\right|^2}\ dz_1\ dz_2\ \ldots\ dz_n\ dk \nonumber&\\*
&=c_n\dint_{0}^{\infty}\sum_{\sigma\in \widehat{M}}{d_\sigma\left\|\left(\dfrac{\partial f}{\partial z_1}\right)^{\widehat{\ }}(\lambda,\sigma)\right\|_{\text{HS}}^2}\ \lambda^{n-1}\ d\lambda. \label{step6-Mn}
\end{flalign}
Combining \eqref{step3-Mn}, \eqref{step4-Mn}, \eqref{step5-Mn} and \eqref{step6-Mn}, we obtain
\begin{flalign}
&&\dfrac{\|f\|_2^2}{2\sqrt{c_n}}&\leq\left(\dint_{K}\dint_{\R^n}{\|z\|^{2a}\ |f(z,k)|^2}\ dz\ dk\right)^{\frac{1}{2a}}\left(\|f\|_2^2\right)^{\frac{1}{2}-\frac{1}{2a}}\times\nonumber &\\
&&&\qquad \left(\dint_{0}^{\infty}\sum_{\sigma\in \widehat{M}}{d_\sigma\ \left\|\left(\dfrac{\partial f}{\partial z_1}\right)^{\widehat{\ }}(\lambda,\sigma)\right\|_{\text{HS}}^2}\ \lambda^{n-1}\ d\lambda\right)^{1/2}. \label{step7-Mn} &
\end{flalign}
For each $\lambda >0$ and $\sigma \in \widehat{M}$, consider the representation $\pi_{\lambda,\sigma}(z,k)$ realised on $L^2(K,\sigma)$ as,
\begin{flalign*}
&&\pi_{\lambda,\sigma}(z,k)g(u)&=e^{i\lambda\langle{u^{-1}\cdot e_1,z}\rangle}\ g(uk),\quad u \in SO(n).&
\end{flalign*}
Denote $u=[u_{ij}]_{n\times n}$, we have 
\begin{flalign*}
&&u^{-1}\cdot e_1&=u^{T}\cdot e_1=[u_{11}\ u_{12}\ \ldots\ u_{1n}]^T.&
\end{flalign*}
So, $\langle{u^{-1}\cdot e_1,z}\rangle=\sum\limits_{i=1}^{n}{u_{1i}z_i}$.\\ 
Since, $f\in\CS(M(n))$,
\begin{flalign*}
&\left(\dfrac{\partial f}{\partial z_1}\right)^{\widehat{\ }}(\lambda,\sigma)g(u)&\\[4pt]
&=\dint_{\R^n}\dint_{K}{\dfrac{\partial f}{\partial z_1}(z_1,z_2,\ldots,z_n,k)\ \pi_{\lambda,\sigma}(z_1,z_2,\ldots,z_n,k)^\ast g(u)}\ dz_1\ dz_2\ \ldots\ dz_n\ dk &\\[4pt]
&=\dint_{\R^n}\dint_{K}{\lim_{h\rightarrow 0}{\left[\dfrac{f(z_1+h,z_2,\ldots,z_n,k)-f(z_1,z_2,\ldots,z_n,k)}{h}\right]}\ \pi_{\lambda,\sigma}(z_1,z_2,\ldots,z_n,k)^\ast g(u)}&\\*
&\hspace{200pt}\ dz_1\ dz_2\ \ldots\ dz_n\ dk &\\
&=\lim_{h\rightarrow 0}\dfrac{1}{h}\left[\dint_{\R^n}\dint_{K}{f(z_1+h,z_2,\ldots,z_n,k)\ \pi_{\lambda,\sigma}(z_1,z_2,\ldots,z_n,k)^\ast g(u)}\ dz_1\ dz_2\ \ldots\ dz_n\ dk\right.&\\[4pt]
&\qquad \left.-\dint_{\R^n}\dint_{K}{f(z_1,z_2,\ldots,z_n,k)\ \pi_{\lambda,\sigma}(z_1,z_2,\ldots,z_n,k)^\ast g(u)}\ dz_1\ dz_2\ \ldots\ dz_n\ dk\right] &\\[4pt]
&=\lim_{h\rightarrow 0}\dfrac{1}{h}\left[\dint_{\R^n}\dint_{K}{f(z_1,z_2,\ldots,z_n,k)\ e^{-i \lambda h u_{11}}\pi_{\lambda,\sigma}(z_1,z_2,\ldots,z_n,k)^\ast g(u)}\ dz_1\ dz_2\ \ldots\ dz_n\ dk\right.&\\
&\qquad \left.-\dint_{\R^n}\dint_{K}{f(z_1,z_2,\ldots,z_n,k)\ \pi_{\lambda,\sigma}(z_1,z_2,\ldots,z_n,k)^\ast g(u)}\ dz_1\ dz_2\ \ldots\ dz_n\ dk\right] &\\[4pt]
&=\lim_{h\rightarrow 0}\left[\dfrac{e^{-i \lambda h u_{11}}-1}{h}\right]\dint_{\R^n}\dint_{K}{f(z_1,z_2,\ldots,z_n,k)\ \pi_{\lambda,\sigma}(z_1,z_2,\ldots,z_n,k)^\ast g(u)}\ dz_1\ dz_2\ \ldots\ dz_n\ dk &\\[4pt]
&=i\lambda u_{11}\dint_{\R^n}\dint_{K}{f(z_1,z_2,\ldots,z_n,k)\ \pi_{\lambda,\sigma}(z_1,z_2,\ldots,z_n,k)^\ast g(u)}\ dz_1\ dz_2\ \ldots\ dz_n\ dk &\\*
&=i\lambda u_{11}\ \widehat{f}(\lambda,\sigma)g(u). &
\end{flalign*}
Hence,
\begin{flalign*}
\left\|\left(\dfrac{\partial f}{\partial z_1}\right)^{\widehat{\ }}(\lambda,\sigma)\right\|_{\text{HS}}^2 &=\sum_{m=0}^{\infty}\sum_{j=1}^{d_m}\dint_K{|i\lambda u_{11}\ \widehat{f}(\lambda,\sigma)g_{mj}(u)|^2}\ du &\\
&\leq \lambda^2\sum_{m=0}^{\infty}\sum_{j=1}^{d_m}\dint_K{|\widehat{f}(\lambda,\sigma)g_{mj}(u)|^2}\ du = \lambda^2\|\widehat{f}(\lambda,\sigma)\|_{\text{HS}}^2. &
\end{flalign*}
So, \eqref{step7-Mn} can be written as
\begin{flalign}
\dfrac{\|f\|_2^2}{2\sqrt{c_n}}&\leq\left(\dint_{K}\dint_{\R^n}{\|z\|^{2a}\ |f(z,k)|^2}\ dz\ dk\right)^{\frac{1}{2a}}\left(\|f\|_2^2\right)^{\frac{1}{2}-\frac{1}{2a}}\times \nonumber &\\*
&\qquad \left(\dint_{0}^{\infty}\sum_{\sigma\in \widehat{M}}{d_\sigma\ \lambda^2\|\widehat{f}(\lambda,\sigma)\|_{\text{HS}}^2}\ \lambda^{n-1}\ d\lambda\right)^{1/2}. \label{step8-Mn} &
\end{flalign}
Now, again using H\"older's inequality, we have
\begin{flalign*}
&\left(\dint_{0}^{\infty}\sum_{\sigma\in \widehat{M}}{d_\sigma\ \lambda^{2b}\|\widehat{f}(\lambda,\sigma)\|_{\text{HS}}^2}\ \lambda^{n-1}\ d\lambda\right)^{\frac{1}{b}}\left(\dint_{0}^{\infty}\sum_{\sigma\in \widehat{M}}{d_\sigma\|\widehat{f}(\lambda,\sigma)\|_{\text{HS}}^2}\ \lambda^{n-1}\ d\lambda\right)^{1-\frac{1}{b}}&\\
&\geq \dint_{0}^{\infty}\sum_{\sigma\in \widehat{M}}{d_\sigma^{1/b}\ \lambda^2\|\widehat{f}(\lambda,\sigma)\|_{\text{HS}}^{\frac{2}{b}}\ d_\sigma^{\left(1-\frac{1}{b}\right)}\|\widehat{f}(\lambda,\sigma)\|_{\text{HS}}^{2\left(1-\frac{1}{b}\right)}}\ \lambda^{n-1}\ d\lambda & \\
&= \dint_{0}^{\infty}\sum_{\sigma\in \widehat{M}}{d_\sigma\ \lambda^2\ \|\widehat{f}(\lambda,\sigma)\|_{\text{HS}}^2}\ \lambda^{n-1}\ d\lambda, &
\end{flalign*}
which implies
\begin{flalign}
&\dint_{0}^{\infty}\sum_{\sigma\in \widehat{M}}{d_\sigma\ \lambda^2\ \|\widehat{f}(\lambda,\sigma)\|_{\text{HS}}^2}\ \lambda^{n-1}\ d\lambda& \leq \left(\dint_{0}^{\infty}\sum_{\sigma\in \widehat{M}}{d_\sigma\ \lambda^{2b}\|\widehat{f}(\lambda,\sigma)\|_{\text{HS}}^2}\ \lambda^{n-1}\ d\lambda\right)^{\frac{1}{b}}\left(\|f\|_2^2\right)^{1-\frac{1}{b}}.   \label{step9-Mn}&
\end{flalign}
Combining \eqref{step8-Mn} and \eqref{step9-Mn}, we obtain
\begin{flalign*}
\dfrac{\|f\|_2^{\left(\frac{1}{a}+\frac{1}{b}\right)}}{2\sqrt{c_n}} &\leq \left(\dint_{K}\dint_{\R^n}{\|z\|^{2a}\ |f(z,k)|^2}\ dz\ dk\right)^{\frac{1}{2a}}\left(\dint_{0}^{\infty}\sum_{\sigma\in \widehat{M}}{d_\sigma\ \lambda^{2b}\|\widehat{f}(\lambda,\sigma)\|_{\text{HS}}^2}\ \lambda^{n-1}\ d\lambda\right)^{\frac{1}{2b}}.& 
\end{flalign*}
\end{proof}
\section{A Class of Nilpotent Lie Groups}\label{ACNLG}
In this section, we shall prove Heisenberg uncertainty inequality for a class of connected, simply connected nilpotent Lie groups $G$ for which the Hilbert-Schmidt norm of the group Fourier transform $\pi_\xi(f)$ of $f$ attains a particular form. \\ 
Let $\g$ be an $n$-dimensional real nilpotent Lie algebra, and let $G=\expo \g$ be the associated connected and simply connected nilpotent Lie group \cite{Cor:Gre:90}. Let $\B=\{X_1,X_2,\ldots,X_n\}$ be a strong Malcev basis of $\g$ through the ascending central series of $\g$. We introduce a `norm function' on $G$ by setting, for $x=\expo(x_1X_1+x_2X_2+\ldots+x_nX_n) \in G$, $x_j\in \R$,
\begin{flalign*}
&&\|x\|&=(x_1^2+\ldots+x_n^2)^{1/2}.&
\end{flalign*}
The composed map
\begin{flalign*}
&&&\R^n\rightarrow \g \rightarrow G,&
\end{flalign*}
given as
\begin{flalign*}
&&&(x_1,\ldots,x_n)\rightarrow \sum_{j=1}^{n}{x_jX_j} \rightarrow \expo\left(\sum_{j=1}^{n}{x_jX_j}\right),&
\end{flalign*}
is a diffeomorphism and maps Lebesgue measure on $\R^n$ to Haar measure on $G$. In this manner, we shall always identify $\g$, and sometimes $G$, as sets with $\R^n$. Thus, measurable (integrable) functions on $G$ can be viewed as such functions on $\R^n$.

Let $\g^\ast$ denote the vector space dual of $\g$ and $\{X_1^\ast,\ldots,X_n^\ast\}$ the basis of $\g^\ast$ which is dual to $\{X_1,\ldots,X_n\}$. Then, $\{X_1^\ast,\ldots,X_n^\ast\}$ is a Jordan-H\"older basis for the coadjoint action of $G$ on $\g^\ast$. We shall identify $\g^\ast$ with $\R^n$ via the map 
\begin{flalign*}
&&&\xi=(\xi_1,\ldots,\xi_n)\rightarrow \sum_{j=1}^{n}{\xi_jX_j^\ast}&
\end{flalign*}
and on $\g^\ast$ we introduce the Euclidean norm relative to the basis $\{X_1^\ast,\ldots,X_n^\ast\}$, i.e.
\begin{flalign*}
&&&\left\|\sum_{j=1}^{n}{\xi_jX_j^\ast}\right\| =(\xi_1^2+\ldots+\xi_n^2)=\|\xi\|.&
\end{flalign*}
Let $\g_j=\R$-span$\{X_1,\ldots,X_n\}$. For $\xi \in \g^\ast$, $\OO_\xi$ denotes the coadjoint orbit of $\xi$. An index $j \in \{1,2,\ldots,n\}$ is a jump index for $\xi$ if
\begin{flalign*}
&&&\g(\xi)+\g_j \neq \g(\xi)+\g_{j-1}.&
\end{flalign*}
We consider,
\begin{flalign*}
&&&e(\xi)=\{j: j\ \text{is a jump index for}\ \xi\}.&
\end{flalign*}
This set contains exactly $\dim(\OO_l)$ indices. Also, there are two disjoint sets $S$ and $T$ of indices with $S \cup T =\{1,\ldots,n\}$ and a $G$-invariant Zariski open set $\U$ of $\g^\ast$ such that $e(\xi)=S$ for all $\xi \in \U$. We define the Pfaffian $\pf(\xi)$ of the skew-symmetric matrix $M_S(\xi)=(\xi([X_i,X_j]))_{i,j\in S}$ as,
\begin{flalign*}
&&&|\pf(\xi)|^2=\det{M_S(\xi)}. &
\end{flalign*}
Let $V_S=\R$-span$\{X_i^\ast: i \in S\}$, $V_T=\R$-span$\{X_i^\ast: i \in T\}$ and $d\xi$ be the Lebesgue measure on $V_T$ such that the unit cube spanned by $\{X_i^\ast:i \in T\}$ has volume $1$. Then, $\g^\ast =V_T \oplus V_S$ and $V_T$ meets $\U$. Let $\W=\U \cap V_T$ be the cross-section for the coadjoint orbits through the points in $\U$. If $d\xi$ is the Lebesgue measure on $\W$, then $d\mu(\xi)=|\pf(\xi)|\ d\xi$ is a Plancherel measure for $\hat{G}$. The Plancherel formula is given by,
\begin{flalign*}
&&&\|f\|_2^2=\dint_{\W}{\|\pi_\xi{(f)}\|_{\text{HS}}^2}\ d\mu(\xi),\quad f \in L^1\cap L^2(G),&
\end{flalign*}
where $\|\pi_\xi{(f)}\|_{\text{HS}}$ denotes the Hilbert-Schmidt norm of $\pi_\xi{(f)}$ and $dg$ is the Haar measure on $G$.\\
We shall consider the case in which $\W$ takes the following form:
\begin{flalign*}
&&\W&=\{\xi=(\xi_1,\xi_2,\ldots,\xi_n)\in \g^\ast:\ \xi_j=0\ \text{for $(n-k)$ values of $j$ with}\ |\pf(\xi)|\neq 0\}.&
\end{flalign*}
We denote the vanishing variables by $\xi_{j_1},\xi_{j_2},\ldots,\xi_{j_{n-k}}$.\\
We consider the class of groups for which for all $\xi\in \W$ and $f \in L^2(G)$ the Hilbert-Schmidt norm $\|\pi_\xi(f)\|_{\text{HS}}^2$ has the following form:
\begin{flalign*}
&&\|\pi_\xi(f)\|_{\text{HS}}^2&=|h(\xi)|\dint_{\R^{n-k}}{\left|\F(f\circ \expo)\left(\xi_1,\xi_2+Q_2,\ldots,\xi_n+Q_n\right)\right|^2}\ d\xi_{j_1}\ d\xi_{j_2}\ \ldots\ d\xi_{j_{n-k}}, &
\end{flalign*}
where $\F$ denotes the Fourier transform on $\R^{n-k}$; $h$ is a function from $\W$ to $\R$ which is non-zero on $\W$ and the functions $Q_m=Q_m(\xi_1,\xi_2,\ldots,\xi_{m-1})$ with $2\leq m \leq n$.

We have the following Heisenberg uncertainty inequality for such groups:
\begin{thm}
For any $f \in L^1\cap L^2(G)$ and $a,b\geq 1$, we have 
\begin{equation}
\dfrac{\|f\|_2^{\left(\frac{1}{a}+\frac{1}{b}\right)}}{4\pi} \leq \left(\dint_{G}{\|x\|^{2a}\ |f(x)|^2}\ dx\right)^{\frac{1}{2a}}\left(\dint_{\W}{\|\xi\|^{2b}\ \|\pi_\xi(f)\|_{\text{HS}}^2}\ \dfrac{1}{|h(\xi)|^b|\pf(\xi)|^{b-1}}\ d\xi\right)^{\frac{1}{2b}}. \label{heq-ACNLG}
\end{equation}
\end{thm}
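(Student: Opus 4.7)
The plan is to adapt the strategy of Theorems \ref{theq-RnK} and \ref{theq-Mn}: first establish the special case $a=b=1$ by combining the one-dimensional Heisenberg inequality with the structural hypothesis on $\|\pi_\xi(f)\|_{\text{HS}}^2$, and then promote to general $a,b\geq 1$ by two applications of H\"older's inequality.

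For the base case, I would use the strong Malcev basis $\B$ to identify $G$ with $\R^n$, so that $F:=f\circ\expo$ lies in $L^2(\R^n)$ with $\|F\|_2=\|f\|_2$. Applying the one-variable Heisenberg inequality to $F$ in a coordinate direction $x_i$ with $i\in T$ (i.e.\ $\xi_i$ is a non-vanishing coordinate on $\W$), integrating over the remaining variables, and using Cauchy--Schwarz together with Plancherel on $\R^{n-1}$ exactly as in the proof of Theorem \ref{theq-Mn}, I obtain
\begin{equation*}
\dfrac{\|f\|_2^2}{4\pi}\leq\left(\dint_G x_i^2\,|f(x)|^2\,dx\right)^{1/2}\left(\dint_{\R^n}\xi_i^2\,|\F F(\xi)|^2\,d\xi\right)^{1/2}.
\end{equation*}
The trivial bound $x_i^2\leq\|x\|^2$ upgrades the position factor. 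For the frequency factor, perform the triangular change of variables $\eta_m=\xi_m+Q_m(\xi_1,\ldots,\xi_{m-1})$ (unit Jacobian) to put the integrand in the form dictated by the structural hypothesis, and split the $\R^n$ integral according to $\R^n\cong V_T\oplus V_S$. Since $i\in T$, the factor $\xi_i^2$ passes through the inner $V_S$-integration, which equals $\|\pi_\xi(f)\|_{\text{HS}}^2/|h(\xi)|$ by hypothesis; applying $\xi_i^2\leq\|\xi\|^2$ on $\W$ then yields
\begin{equation*}
\dint_{\R^n}\xi_i^2\,|\F F(\xi)|^2\,d\xi\leq\dint_{\W}\|\xi\|^2\,\dfrac{\|\pi_\xi(f)\|_{\text{HS}}^2}{|h(\xi)|}\,d\xi.
\end{equation*}

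To introduce the exponents $a$ and $b$, I apply H\"older on each side. On the position side the calculation of \eqref{step2-RnK} gives $\dint_G\|x\|^2|f|^2\,dx\leq(\dint_G\|x\|^{2a}|f|^2\,dx)^{1/a}\|f\|_2^{2(1-1/a)}$. On the frequency side, write the integrand as the product
\begin{equation*}
\dfrac{\|\xi\|^2\|\pi_\xi(f)\|_{\text{HS}}^2}{|h(\xi)|}=\left(\dfrac{\|\xi\|^{2b}\|\pi_\xi(f)\|_{\text{HS}}^2}{|h(\xi)|^b|\pf(\xi)|^{b-1}}\right)^{1/b}\bigl(\|\pi_\xi(f)\|_{\text{HS}}^2\,|\pf(\xi)|\bigr)^{1-1/b},
\end{equation*}
and apply H\"older with conjugate exponents $b$ and $b/(b-1)$; the second integral evaluates to $\|f\|_2^2$ by the Plancherel formula on $G$. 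Substituting both H\"older bounds into the base estimate and collecting the exponents $2-(a-1)/a-(b-1)/b=1/a+1/b$ yields \eqref{heq-ACNLG}.

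The main obstacle will be the conversion step from the Euclidean Fourier integral on $\R^n$ to the Plancherel integral on $\W$: one must carefully verify that the nonlinear change of variables $\eta_m=\xi_m+Q_m$ aligns the Lebesgue measure with the fibered measure on $V_S\oplus V_T$, that the structural formula for $\|\pi_\xi(f)\|_{\text{HS}}^2$ applies in this realization, and that the coordinate bound $\xi_i^2\leq\|\xi\|^2$ survives. Once this geometric bookkeeping is settled, the H\"older manipulations are routine analogs of those used in Theorems \ref{theq-RnK} and \ref{theq-Mn}.
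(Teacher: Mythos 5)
Your proposal follows essentially the same route as the paper: establish the case $a=b=1$ by reducing to the single-coordinate Heisenberg inequality for $F=f\circ\expo$ on $\R^n$ via the structural formula for $\|\pi_\xi(f)\|_{\text{HS}}^2$, then apply H\"older's inequality once on each side, using the Plancherel identity $\dint_{\W}{\|\pi_\xi(f)\|_{\text{HS}}^2\ |\pf(\xi)|}\ d\xi=\|f\|_2^2$ to evaluate the complementary factor on the frequency side. The H\"older manipulations and the exponent bookkeeping are exactly those of the paper.

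There is, however, one concrete point where the argument as written breaks: the choice of the coordinate index $i$. You allow any $i\in T$, but the weight $\xi_i^2$ does not in general survive the shear $\eta_m=\xi_m+Q_m(\xi_1,\ldots,\xi_{m-1})$: in the new variables it becomes $(\eta_i-Q_i)^2$ rather than $\eta_i^2$, so the claimed inequality $\dint_{\R^n}{\xi_i^2\ |\F F(\xi)|^2}\ d\xi\leq\dint_{\W}{\|\xi\|^2\ \|\pi_\xi(f)\|_{\text{HS}}^2\ |h(\xi)|^{-1}}\ d\xi$ fails unless $Q_i\equiv 0$. The structural hypothesis guarantees this only for $i=1$, since the first slot of $\F(f\circ \expo)$ is the unshifted $\xi_1$; accordingly the paper works exclusively with $x_1$ and $\xi_1$ (implicitly assuming, as all of its examples satisfy, that $\xi_1$ is one of the non-vanishing coordinates on $\W$). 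You flag this conversion step as ``the main obstacle'' but treat it as routine bookkeeping; it becomes routine only after you fix $i=1$. With that specialization your proof coincides with the paper's.
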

\begin{proof}
Assuming both the integrals on right hand side of \eqref{heq-ACNLG} finite, we have
\begin{flalign}
&\left(\dint_{G}{\|x\|^2\ |f(x)|^2}\ dx\right)^{1/2}\left(\dint_{\W}{\|\xi\|^2\ \|\pi_\xi(f)\|_{\text{HS}}^2}\ \dfrac{1}{|h(\xi)|}\ d\xi\right)^{1/2} \nonumber&\\*
&=\left(\dint_{\R^n}{\sum_{i=1}^{n}{|x_i|^2}\ \left|(f\circ \exp)\left(\sum_{i=1}^{n}{x_i X_i}\right)\right|^2}\ dx_1\ \ldots\ dx_n\right)^{1/2}\times \nonumber&\\
&\qquad \left(\dint_{\R^k}\dint_{\R^{n-k}}{\sum_{i=1}^{n}{|\xi_i|^2}\ \left|\F(f\circ \expo)\left(\xi_1,\xi_2+Q_2,\ldots,\xi_n+Q_n\right)\right|^2}\ d\xi_1\ \ldots\ d\xi_n\right)^{1/2}  \nonumber&\\[10pt]
&\geq \left(\dint_{\R^n}{|x_1|^2\ \left|(f\circ \exp)\left(\sum_{i=1}^{n}{x_i X_i}\right)\right|^2}\ dx_1\ \ldots\ dx_n\right)^{1/2} \times \nonumber&\\*
&\qquad \left(\dint_{\R^k}\dint_{\R^{n-k}}{|\xi_1|^2\ \left|\F(f\circ \expo)\left(\xi_1,\xi_2+Q_2,\ldots,\xi_n+Q_n\right)\right|^2}\ d\xi_1\ \ldots\ d\xi_n\right)^{1/2}.  \nonumber&\\
&=\left(\dint_{\R^n}{|x_1|^2\ \left|F\left(x_1,\ldots,x_n\right)\right|^2}\ dx_1\ \ldots\ dx_n\right)^{1/2}\times \nonumber&\\*
&\qquad \left(\dint_{\R^n}{|\xi_1|^2\ \left|\widehat{F}\left(\xi_1,\xi_2,\ldots,\xi_n\right)\right|^2}\ d\xi_1\ d\xi_2 \ \ldots\ d\xi_n\right)^{1/2},  & \label{step0-ACNLG}
\end{flalign}
where $F(x_1,\ldots,x_n)=(f\circ \exp)\left(\sum_{i=1}^{n}{x_i X_i}\right)$ which is in $L^2(R^n)$ and $\hat{F}$ being its Fourier transform. \\
By Heisenberg inequality on $\R^n$, we have
\begin{flalign}
\dfrac{\|F\|_2^2}{4\pi} &\leq \left(\dint_{\R^n}{|x_1|^2\ \left|F\left(x_1,\ldots,x_n\right)\right|^2}\ dx_1\ \ldots\ dx_n\right)^{1/2}\times \nonumber&\\
&\qquad \left(\dint_{\R^n}{|\xi_1|^2\ \left|\widehat{F}\left(\xi_1,\xi_2,\ldots,\xi_n\right)\right|^2}\ d\xi_1\ d\xi_2 \ \ldots\ d\xi_n\right)^{1/2}.  & \label{step00-ACNLG}
\end{flalign}
\vspace{-15pt}
\begin{flalign}
\text{But,}\ \|F\|_2^2&=\dint_{\R^n}{|F(x_1,\ldots,x_n)|^2}\ dx_1\ \ldots\ dx_n \nonumber&\\
&=\dint_{\R^n}{\left|(f\circ \exp)\left(\sum_{i=1}^{n}{x_i X_i}\right)\right|^2}\ dx_1\ \ldots\ dx_n=\dint_{G}{|f(x)|^2}\ dx =\|f\|_2^2. \hspace{-20pt}&\label{step000-ACNLG}
\end{flalign}
Combining \eqref{step0-ACNLG}, \eqref{step00-ACNLG} and \eqref{step000-ACNLG}, we get
\begin{flalign}
&&\dfrac{\|f\|_2^2}{4\pi} &\leq \left(\dint_{G}{\|x\|^2\ |f(x)|^2}\ dx\right)^{1/2}\left(\dint_{\W}{\|\xi\|^2\ \|\pi_\xi(f)\|_{\text{HS}}^2}\ \dfrac{1}{|h(\xi)|}\ d\xi\right)^{1/2}.  \label{step1-ACNLG}&
\end{flalign}
Now, as in the proof of Theorem \ref{theq-Mn}, applications of H\"older's inequality give
\begin{flalign}
&&\dint_{G}{\|x\|^2\ |f(x)|^2}\ dx \leq &\left(\dint_{G}{\|x\|^{2a}\ |f(x)|^2}\ dx\right)^{\frac{1}{a}}\left(\|f\|_2^2\right)^{1-\frac{1}{a}}  \label{step2-ACNLG}&
\end{flalign}
and
\begin{flalign}
\dint_{\W}{\|\xi\|^2\ \|\pi_\xi(f)\|_{\text{HS}}^2}\ \dfrac{1}{|h(\xi)|}\ d\xi \leq &\left(\dint_{\W}{\|\xi\|^{2b}\ \|\pi_\xi(f)\|_{\text{HS}}^2}\ \dfrac{1}{|h(\xi)|^b|\pf(\xi)|^{b-1}}\ d\xi\right)^{\frac{1}{b}}\left(\|f\|_2^2\right)^{1-\frac{1}{b}}.  \label{step3-ACNLG}&
\end{flalign}
Combining \eqref{step1-ACNLG}, \eqref{step2-ACNLG} and \eqref{step3-ACNLG}, we obtain
\begin{flalign*}
\dfrac{\|f\|_2^{\left(\frac{1}{a}+\frac{1}{b}\right)}}{4\pi} &\leq \left(\dint_{G}{\|x\|^{2a}\ |f(x)|^2}\ dx\right)^{\frac{1}{2a}}\left(\dint_{\W}{\|\xi\|^{2b}\ \|\pi_\xi(f)\|_{\text{HS}}^2}\ \dfrac{1}{|h(\xi)|^b|\pf(\xi)|^{b-1}}\ d\xi\right)^{\frac{1}{2b}}.&
\end{flalign*}
\vspace{-10pt}
\end{proof}
\begin{ex}
We now list several classes that are included in the above general class. 
\renewcommand{\labelenumi}{\arabic{enumi}.}
\begin{enumerate}
\item For thread-like nilpotent Lie groups \text{(}for details, see \cite{Kan:Kum:01}\text{)}, we have $\pf(\xi)=\xi_1$ and
\begin{flalign*}
&&\W&=\{\xi=(\xi_1,0,\xi_3,\ldots,\xi_{n-1},0):\xi_j \in \R, \xi_1\neq 0\}.&
\end{flalign*}
Also, $\|\pi_\xi(f)\|_{\text{HS}}$ is given by
\begin{flalign*}
\|\pi_\xi(f)\|_{\text{HS}}^2&=\dfrac{1}{|\xi_1|}\dint_{\R^2}{\left|\F{(f\circ \expo)}\left(\xi_1,t,\xi_3+Q_3,\ldots,\xi_{n-1}+Q_{n-1},s\right)\right|^2}\ ds\ dt, &
\end{flalign*}
where $Q_j(\xi_1,0,\xi_3,\ldots,\xi_{j-1},t)=\sum\limits_{k=1}^{j-1}{\dfrac{1}{k!}\ \dfrac{t^k}{\xi_1^k}\ \xi_{j-k}}$, for $3\leq j\leq n-1$.\\
Thus, for $h(\xi)=\dfrac{1}{|\xi_1|}=\dfrac{1}{|\pf(\xi)|}$, one obtains the Heisenberg uncertainty inequality,
\begin{flalign*}
&&\dfrac{\|f\|_2^{\left(\frac{1}{a}+\frac{1}{b}\right)}}{4\pi} &\leq \left(\dint_{G}{\|x\|^{2a}\ |f(x)|^2}\ dx\right)^{\frac{1}{2a}}\left(\dint_{\W}{\|\xi\|^{2b}\ \|\pi_\xi(f)\|_{\text{HS}}^2}\ |\xi_1|\ d\xi\right)^{\frac{1}{2b}}.&
\end{flalign*}
\item For $2$-NPC nilpotent Lie groups \text{(}for details, see \cite{Bak:Sal:08}\text{)}, let $\{0\}=\g_0 \subset \g_1 \subset \cdots \subset \g_n =\g$ be a Jordan-H\"older sequence in $\g$ such that $\g_m=\z(g)$ and $\hf=\g_{n-2}$. Let us consider the ideal $[\g,\g_{m+1}]$ of $\g$ which is one or two dimensional in $\g$. We discuss the two cases separately:\\
(a) $\dim{[\g,\g_{m+1}]}=2$. \\
In this case, for every basis $\{X_1,X_2\}$ of $\hf$ in $\g$ and every $Y_1 \in \g_{m+1}\setminus\z(\g)$, the vectors $Z_1=[X_1,Y_1]$ and $Z_2=[X_2,Y_1]$ are linearly independent and lie in the center of $\g$. Assume that $\g_1=\R$-span$\{Z_1\}$, $\g_2=\R$-span$\{Z_1,Z_2\}$. Let $Z_3,\ldots,Z_m$ be some vectors such that $\z(\g)=\R$-span$\{Z_1,\ldots,Z_m\}$ and $\B=\{Z_1,\ldots,Z_n\}$ a Jordan-H\"older basis of $\g$ chosen as follows:
\renewcommand{\labelenumii}{(\roman{enumii})}
\begin{enumerate}
\item $\z(\g)=\R$-span$\{Z_1,\ldots,Z_m\}$
\item $\hf=\R$-span$\{Z_1,\ldots,Z_{n-2}\}$
\item $\g=\R$-span$\{Z_1,\ldots,Z_{n-2}, X_1=Z_{n-1},X_2=Z_n\}$.
\end{enumerate}
For $m_1=m+1$ and $m+2\leq m_2 \leq n-2$, we denote $Z_{m_1}=Z_{m+1}=Y_1$, $Z_{m_2}=Y_2$. These vectors can be chosen such that $\xi_1=\xi([X_1,Y_1])\neq 0$, $\xi_{2,2}=\xi([X_2,Y_2])\neq 0$, for all $\xi\in \W$, where
\begin{flalign*}
\W&=\{\xi=(\xi_1,\xi_2,\ldots,\xi_m,0,0,\xi_{m+3},\xi_{m+4},\ldots,\xi_{n-2},0,0):\xi_j \in \R, |\pf(\xi)|\neq 0\}.&
\end{flalign*}
 Also, we have $\pf(\xi)=\xi(Z_1)\ \xi([X_2,Y_2])-\xi([X_1,Y_2])\ \xi(Z_2)$ and $\|\pi_\xi(f)\|_{\text{HS}}$ is given by,
\begin{flalign*}
\|\pi_\xi(f)\|_{\text{HS}}^2&=|h(\xi)|\dint_{\R^4}{\left|\F{(f\circ \expo)}\left(s_2,s_1,P_{n-2}\left(\xi,-\dfrac{t_1}{\tilde{\xi}_{1,1}},-\dfrac{t_2}{\tilde{\xi}_{2,2}}\right),\ldots,\right.\right.}& \\*
&\qquad{\left.\left. P_{m+3}\left(\xi,-\dfrac{t_1}{\tilde{\xi}_{1,1}},-\dfrac{t_2}{\tilde{\xi}_{2,2}}\right),t_2,t_1,\xi_m,\ldots,\xi_1\right)\right|^2}\ ds_1\ ds_2\ dt_1\ dt_2, &
\end{flalign*}
where $h$ is the function defined by
\begin{flalign*}
&&h(\xi)&=\dfrac{|\xi_1\xi_{2,2}|^2}{|\xi_1\xi_{2,2}-\xi_{1,2}\xi_2|^2},&
\end{flalign*} 
$\xi_{i,j}=\xi([X_i,Y_j])$, $\tilde{\xi}_{i,j}=\xi([X_i(\xi)),Y_j])$ and $P_j(\xi,t)$ is a polynomial function with respect to the variables $t=(t_1,t_2)$ and $\xi_{m+1},\ldots,\xi_j$ and rational in the variables $\xi_1,\ldots,\xi_m$.
Thus, one obtains the Heisenberg uncertainty inequality,
\begin{flalign*}
\dfrac{\|f\|_2^{\left(\frac{1}{a}+\frac{1}{b}\right)}}{4\pi} &\leq \left(\dint_{G}{\|x\|^{2a}\ |f(x)|^2}\ dx\right)^{\frac{1}{2a}}\left(\dint_{\W}{\|\xi\|^{2b}\ \|\pi_\xi(f)\|_{\text{HS}}^2}\ \dfrac{1}{|h(\xi)|^b|\pf(\xi)|^{b-1}}\ d\xi\right)^{\frac{1}{2b}}.&
\end{flalign*}
(b) $\dim{[\g,\g_{m+1}]}=1$. \\
In this case, we have $\pf(\xi)=\xi([X_1,Y_1])\cdot \xi([X_2,Y_2])$ and
\begin{flalign*}
&&\W&=\left\{\xi=(\xi_1,\xi_2,\ldots,\xi_m,0,\xi_{m+2},\ldots,\xi_{m+d+1},0,\xi_{m+d+3},\ldots,\xi_{n-2},0,0)\right.&\\*
&&&\hspace{150pt} \left.:\xi_j \in \R, |\pf(\xi)|\neq 0\right\}.&
\end{flalign*}
Also, $\|\pi_\xi(f)\|_{\text{HS}}$ is given by,
\begin{flalign*}
\|\pi_\xi(f)\|_{\text{HS}}^2&=\dfrac{1}{|\pf(\xi)|}\dint_{\R^4}{\left|\F{(f\circ \expo)}\left(s_2,s_1,P_{n-2}\left(\xi,-\dfrac{t_1}{\xi_1},-\dfrac{t_2+R(-\frac{t_1}{\xi_1},\xi_1,\ldots,\xi_{m+d})}{\xi_{2,2}}\right),\right.\right.}& \\
&\qquad\qquad\qquad {\left.\left. \ldots,t_2,\ldots,P_{m+2}\left(\xi,-\dfrac{t_1}{\xi_1}\right),t_1,\xi_m,\ldots,\xi_1\right)\right|^2}\ ds_1\ ds_2\ dt_1\ dt_2. &
\end{flalign*}
Thus, for $h(\xi)=\dfrac{1}{|\pf(\xi)|}$, one obtains the Heisenberg uncertainty inequality,
\begin{flalign*}
\dfrac{\|f\|_2^{\left(\frac{1}{a}+\frac{1}{b}\right)}}{4\pi}& \leq \left(\dint_{G}{\|x\|^{2a}\ |f(x)|^2}\ dx\right)^{\frac{1}{2a}}\left(\dint_{\W}{\|\xi\|^{2b}\ \|\pi_\xi(f)\|_{\text{HS}}^2}\ |\pf(\xi)|\ d\xi\right)^{\frac{1}{2b}}.&
\end{flalign*}
\item For connected, simply connected nilpotent Lie groups $G=\expo{\g}$ such that $\g(\xi) \subset [\g,\g]$ for all $\xi \in \U$ \text{(}for details, see \cite{Sma:11}\text{)}, we consider $S=\{j_1<\ldots<j_d\}$ and $T=\{t_1<\ldots<t_r\}$ to be the collection of jump and non-jump indices respectively, with respect to the basis $\B$. We have, $j_d=n$ and
\begin{flalign*}
&&\W&=\{\xi=(\xi_1,\xi_2,\ldots,\xi_n)\in \g^\ast: \xi_{j_i}=0\ \text{for $j_i \in S$ with}\ |\pf(\xi)|\neq 0\}.&
\end{flalign*} 
Also, $\|\pi_\xi(f)\|_{\text{HS}}$ is given by,
\begin{flalign*}
&&\|\pi_\xi(f)\|_{\text{HS}}^2&=\dfrac{|\xi([X_{j_1},X_n])|}{|\pf(\xi)|^2}\dint_{\W}{\left|\F{(f\circ \expo)}\left(\xi,w\right)\right|^2}\ dw, &
\end{flalign*}
where $\xi=(\xi_{t_i})_{t_i\in T}$ and $w=(w_{j_i})_{j_i\in S}$. Thus, for $h(\xi)=\dfrac{|\xi([X_{j_1},X_n])|}{|\pf(\xi)|^2}$, one obtains the Heisenberg uncertainty inequality,
\begin{flalign*}
\dfrac{\|f\|_2^{\left(\frac{1}{a}+\frac{1}{b}\right)}}{4\pi} &\leq \left(\dint_{G}{\|x\|^{2a}\ |f(x)|^2}\ dx\right)^{\frac{1}{2a}}\left(\dint_{\W}{\|\xi\|^{2b}\ \|\pi_\xi(f)\|_{\text{HS}}^2}\ \dfrac{|\pf(\xi)|^{b+1}}{|\xi([X_{j_1},X_n])|^b}\ d\xi\right)^{\frac{1}{2b}}.&
\end{flalign*}
\item For low-dimensional nilpotent Lie groups of dimension less than or equal to $6$ \text{(}for details, see \cite{Nie:83}\text{)} except for $G_{6,8}$, $G_{6,12}$, $G_{6,14}$, $G_{6,15}$, $G_{6,17}$, an explicit form of $\|\pi_\xi(f)\|_{\text{HS}}$ can be obtained. Thus, an explicit Heisenberg uncertainty inequality can be written down.
\item The classes mentioned above are distinct. For instance, $G_{5,5}$ is thread-like nilpotent Lie group, but it does not belong to class mentioned in Example 3. Also, $G_{5,3}$ belongs to class mentioned in Example 3, but it is not thread-like nilpotent Lie group.
\end{enumerate}
\end{ex}
\section*{Competing Interests}
The authors declare that they have no competing interests.
\section*{Authors' Contributions}
All authors contributed equally to this paper and they read and approved the final manuscript.
\section*{Acknowledgements}
The second author was supported by R \& D grant of University of Delhi. The authors would like to thank the referees for many valuable suggestions which helped in improving the exposition.
\begin{bibdiv}
\begin{biblist}
\bib{Bak:Sal:08}{article}{
title={On theorems of Beurling and Cowling-Price for certain nilpotent Lie groups},
author={Baklouti, A.},
author={Salah, N.B.},
year={2008},
volume={132},
pages={529-550},
journal={Bull. Sci. Math.}
}
\bib{Cor:Gre:90}{book}{
title={Representations of nilpotent Lie groups and their applications, Part I. Basic theory and examples},
author={Corwin, L.},
author={Greenleaf, F.P.},
publisher={Cambridge University Press},
date={1990}
}
\bib{Fol:Sit:97}{article}{
title={The Uncertainty Principle: A Mathematical Survey},
author={Folland, G.B.},
author={Sitaram, A.},
year={1997},
volume={3},
pages={207-238},
number={3},
journal={The Journal of Fourier Analysis and Applications}
}
\bib{Kan:Kum:01}{article}{
title={Hardy's Theorem for simply connected nilpotent Lie groups},
author={Kaniuth, E and Kumar, A},
year={2001},
pages={487-494},
number={131},
journal={Math. Proc. Camb. Phil. Soc.}
}
\bib{Kum:Oka:73}{article}{
title={An analogue of the Paley-Wiener theorem for the euclidean motion group},
author={Kumahara, K.},
author={Okamoto, K.},
year={1973},
pages={77-92},
number={10},
journal={Osaka J. Math.}
}
\bib{Nie:83}{article}{
title={Unitary representations and coadjoint orbits of low-dimensional nilpotent Lie groups},
author={Nielson, O.A.},
year={1983},
journal={Queens Papers in Pure and Appl. Math., Queen's Univ., Kingston, ON}
}
\bib{Sma:11}{article}{
title={Beurling's Theorem for nilpotent Lie groups},
author={Smaoui, K.},
year={2011},
volume={48},
pages={127-147},
journal={Osaka J. Math.}
}
\bib{Sar:Tha:04}{article}{
title={On the theorems of Beurling and Hardy for the euclidean motion group},
author={Sarkar, R.P.},
author={Thangavelu, S.},
year={2005},
volume={57},
pages={335-351},
journal={Tohoku Math. J.}
}
\bib{Sit:Sun:Tha:95}{article}{
title={Uncertainty principles on certain Lie groups},
author={Sitaram, A.},
author={Sundari, M.},
author={Thangavelu, S.},
year={1995},
volume={105},
pages={135-151},
journal={Proc. Math. Sci.}
}
\bib{Tha:90}{article}{
title={Some uncertainty inequalities},
author={Thangavelu, S.},
year={1990},
volume={100},
pages={137-145},
number={2},
journal={Proc. Indian Acad. Sci.}
}
\bib{Xia:He:12}{article}{
title={Uncertainty inequalities for the Heisenberg group},
author={Xiao, J.},
author={He, J.},
year={2012},
volume={122},
pages={573-581},
number={4},
journal={Proc. Indian Acad. Sci. (Math. Sci.)}
}
\end{biblist}
\end{bibdiv}
\end{document}